\documentclass[12pt,a4paper]{article}
\usepackage{mathrsfs}
\usepackage{indentfirst}
\setlength{\parskip}{3\lineskip}
\usepackage{amsmath,amssymb,amsfonts,amsthm,graphics,graphicx}
\usepackage{makeidx}
\usepackage{color}

\setlength{\textwidth}{160mm} \setlength{\textheight}{235mm}
\setlength{\headheight}{3cm} \setlength{\topmargin}{0pt}
\setlength{\headsep}{0pt} \setlength{\oddsidemargin}{0pt}
\setlength{\evensidemargin}{0pt}

\parindent 15pt
\voffset -25mm \rm
\parskip=6pt

\newtheorem{theorem}{Theorem}

\newtheorem{proposition}{Proposition}

\newtheorem{observation}{Observation}

\begin{document}
\title{\Large\bf Lower bounds for the spanning tree numbers of two graph
products\footnote{Supported by NSFC No.11071130.}}
\author{\small Hengzhe Li, Xueliang Li, Yaping Mao, Jun Yue
\\
\small Center for Combinatorics and LPMC-TJKLC
\\
\small Nankai University, Tianjin 300071, China
\\
\small lhz2010@mail.nankai.edu.cn; lxl@nankai.edu.cn;\\
\small maoyaping@ymail.com; yuejun06@126.com}
\date{}
\maketitle
\begin{abstract}
For any graph $G$ of order $n$, the spanning tree packing number
\emph{$STP(G)$}, is the maximum number of edge-disjoint spanning
trees contained in $G$. In this paper, we obtain some sharp lower
bounds for the spanning tree numbers of Cartesian product graphs and
Lexicographic product graphs.

{\flushleft\bf Keywords}: connectivity; spanning tree number;
Cartesian product; Lexicographic product.\\[2mm]
{\bf AMS subject classification 2010:} 05C40, 05C05, 05C76.
\end{abstract}

\section{Introduction}
All graphs in this paper are undirected, finite and simple. We refer
to the book \cite{bondy} for graph theoretic notation and
terminology not described here. For any graph $G$ of order $n$, the
\emph{spanning tree packing number} or \emph{$STP$ number}, denoted
by $\sigma=\sigma(G)$, is the maximum number of edge-disjoint
spanning trees contained in $G$. The problem studying the $STP$
number of a graph is called the \emph{Spanning Tree Packing
Problem}. For the spanning tree packing problem, Palmer
\cite{Palmer} published a survey paper on this subject. Later, Ozeki
and Yamashita gave a survey paper on the spanning tree problem. For
more details, we refer to \cite{OY}.

With graphs considered as natural models for many network design
problems, (edge-)connectivity and maximum number of edge-disjoint
spanning trees of a graph have been used as measures for reliability
and strength in communication networks modeled as a graph (see
\cite{Cunningham, Matula}).

Graph products are important methods to construct bigger graphs, and
play key roles in the design and analysis of networks. In
\cite{Peng2}, Peng and Tay obtained the spanning tree numbers of
Cartesian products of various combination of complete graphs,
cycles, complete multipartite graphs. Note that $Q_{n}\cong
P_{2}\Box P_{2}\Box\cdots\Box P_{2}$, where $Q_n$ is the
$n$-hypercube. Let $K_{n(m)}$ denote a complete multipartite graph
with $n$ parts each of which contains exact $m$ vertices.

\begin{proposition}\cite{Palmer, Peng2}\label{pro1}
$(1)$ $\sigma(K_n\Box C_m)=\lfloor \frac{n+1}{2}\rfloor$;

$(2)$ For $2\leq n\leq m$, $\sigma(K_n\Box K_m)=\lfloor
\frac{n+m-2}{2}\rfloor$;

$(3)$ For $n$-hypercube $Q_n\cong P_{2}\Box P_{2}\Box\cdots\Box
P_{2}$, $\sigma(Q_n)=\lfloor \frac{n}{2}\rfloor$;

$(4)$ $\sigma(K_{n(m)}\Box K_r)=\lfloor \frac{nm-m+r-1}{2}\rfloor$;

$(5)$ For a cycle $C_r$ with $r$ vertices, $\sigma(K_{n(m)}\Box
C_r)=\lfloor \frac{nm-m+2}{2}\rfloor$;

$(6)$ $\sigma(K_{n(m)}\Box K_{r(t)})=\lfloor
\frac{m(n-1)+(r-1)t}{2}\rfloor$;

$(7)$ $\sigma(K_{n(m)})=\lfloor \frac{m(n-1)}{2}\rfloor$.
\end{proposition}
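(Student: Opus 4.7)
The plan is to prove each of the seven equalities by matching upper and lower bounds. For the upper bound, the elementary edge-counting corollary of Nash--Williams/Tutte suffices: since $k$ edge-disjoint spanning trees of $G$ use exactly $k(|V(G)|-1)$ edges, one has $\sigma(G)\le\lfloor|E(G)|/(|V(G)|-1)\rfloor$, and a direct computation shows that in every case this bound already collapses to the stated floor. For (1) for instance, $|V(K_n\Box C_m)|=nm$ and $|E(K_n\Box C_m)|=m\binom{n}{2}+nm=nm(n+1)/2$, so $\lfloor nm(n+1)/(2(nm-1))\rfloor=\lfloor(n+1)/2\rfloor$; the other six parts reduce to analogous arithmetic on the edge counts of $K_n\Box K_m$, $Q_n$, $K_{n(m)}\Box K_r$, $K_{n(m)}\Box C_r$, $K_{n(m)}\Box K_{r(t)}$ and $K_{n(m)}$ itself.

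For the lower bound one must exhibit the requisite pairwise edge-disjoint family of spanning trees explicitly. The unifying strategy is a \emph{row--column} construction on $G\Box H$: given edge-disjoint spanning trees $T_1,\dots,T_k$ of the first factor $G$ and $S_1,\dots,S_\ell$ of the second factor $H$, assemble each new spanning tree of $G\Box H$ out of horizontal copies of some $T_i$ placed in the rows $G\times\{h\}$, glued together by a single vertical $S_j$ in one chosen column $\{g\}\times H$; rotating through the available $T_i$'s and the choice of gluing column produces the required edge-disjoint family. Part (3) illustrates this cleanly by induction on $n$ via $Q_n=Q_{n-1}\Box P_2$. Parts (4)--(6) rest on the ``base case'' (7), whose lower bound I would prove by a near-Hamilton decomposition of $K_{n(m)}$: this graph is $m(n-1)$-regular, and its edge set splits into $\lfloor m(n-1)/2\rfloor$ Hamilton cycles (plus at most one perfect matching), each of which becomes a spanning tree after deleting a single edge.

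The main obstacle is the edge-disjoint bookkeeping in (4)--(6). Because $K_{n(m)}$ lacks the within-part edges present in $K_{nm}$, the horizontal $T_i$'s must avoid those forbidden pairs, and the gluing column must be chosen so that each partial tree is simultaneously connected and acyclic. Ensuring that the rotating assignments produce exactly $\lfloor(m(n-1)+(r-1)t)/2\rfloor$ trees (and the corresponding counts in (4) and (5)) is where the substantive work of the proof lies; the upper bound is uniformly the easy direction and all genuine effort goes into the constructive assembly.
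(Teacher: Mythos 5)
First, note that the paper itself offers no proof of this proposition: it is imported verbatim from Palmer and from Peng--Tay, so there is no internal argument to compare yours against. Judged on its own terms, your upper-bound half is fine: $\sigma(G)\le\lfloor |E(G)|/(|V(G)|-1)\rfloor$ does collapse to the stated floors for these products (modulo small-parameter checks that you wave at but that do occasionally matter, e.g.\ $(7)$ with $m=1$).

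The genuine gap is in the lower bound. The row--column scheme you describe --- one entire horizontal parallel subgraph $\bigcup_h T_i\times\{h\}$ glued by a single column tree $S_j$, rotating over $i$ and over the gluing column --- consumes one whole horizontal layer per spanning tree of $G\Box H$, so it yields at most $\sigma(G)$ trees; even in its strongest symmetric form (also building vertical-layer trees glued by row trees) it yields at most $\sigma(G)+\sigma(H)-1$, which is exactly the paper's Theorem~\ref{th1} and is strictly smaller than the values claimed in the proposition. Concretely: for $K_3\Box K_3$ the proposition asserts $\sigma=2$, while $\sigma(K_3)+\sigma(K_3)-1=1$; for $K_n\Box C_m$ with $n$ odd the proposition asserts $(n+1)/2$, while $\sigma(K_n)+\sigma(C_m)-1=(n-1)/2$; for $K_{n(m)}\Box C_r$ the claimed value is $\lfloor m(n-1)/2\rfloor+1$, again one more than your scheme can produce. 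The missing trees must be assembled from the \emph{leftover} edges that each factor's near-Hamilton decomposition wastes (e.g.\ the unused edge of each Hamilton cycle of $K_n$ in every row, recombined across rows with unused column edges), and orchestrating this recombination is precisely the content of Peng--Tay's proofs; your proposal names this bookkeeping as ``where the substantive work lies'' but the construction you actually specify provably cannot reach the stated numbers, so the core of the lower bound is not established. An alternative that avoids explicit construction entirely is the Nash--Williams/Tutte partition theorem: one shows that for these uniformly dense products the singleton partition minimizes $\lfloor e_P/(|P|-1)\rfloor$, which matches the edge-count upper bound; that is a viable route, but it is not the one you describe.
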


In this paper, we focus on general graphs and give some lower bounds
for the $STP$ numbers of Cartesian product graphs, Lexicographic
product graphs. Moreover, these lower bounds are sharp.

\section{For Cartesian product}

Recall that the \emph{Cartesian product} (also called the {\em
square product}) of two graphs $G$ and $H$, written as $G\Box H$, is
the graph with vertex set $V(G)\times V(H)$, in which two vertices
$(u,v)$ and $(u',v')$ are adjacent if and only if $u=u'$ and
$(v,v')\in E(H)$, or $v=v'$ and $(u,u')\in E(G)$. Clearly, the
Cartesian product is commutative, that is, $G\Box H\cong H\Box G$.

Let $G$ and $H$ be two connected graphs with
$V(G)=\{u_1,u_2,\ldots,u_{n_1}\}$ and
$V(H)=\{v_1,v_2,\ldots,v_{n_2}\}$, respectively. We use $G(u_j,v_i)$
to denote the subgraph of $G\Box H$ induced by the set
$\{(u_j,v_i)\,|\,1\leq j\leq n_1\}$. Similarly, we use $H(u_j,v_i)$
to denote the subgraph of $G\Box H$ induced by the set
$\{(u_j,v_i)\,|\,1\leq i\leq n_2\}$. It is easy to see
$G(u_{j_1},v_i)=G(u_{j_2},v_i)$ for different $u_{j_1}$ and
$u_{j_2}$ of $G$. Thus, we can replace $G(u_{j},v_i)$ by $G(v_i)$
for simplicity. Similarly, we can replace $H(u_{j},v_i)$ by
$H(u_j)$. For any $u,u'\in V(G)$ and $v,v'\in V(H)$, $(u,v),\
(u,v')\in V(H(u))$, $(u',v),\ (u',v')\in V(H(u'))$, $(u,v),\
(u',v)\in V(G(v))$, and $(u,v),\ (u',v)\in V(G(v))$. We refer to
$(u,v)$ as the vertex corresponding to $u$ in $G(v)$. Clearly,
$|E(G\Box H)|=|E(H)||V(G)|+|E(G)||V(H)|$.

\begin{figure}[h,t,b,p]
\begin{center}
\scalebox{0.7}[0.7]{\includegraphics{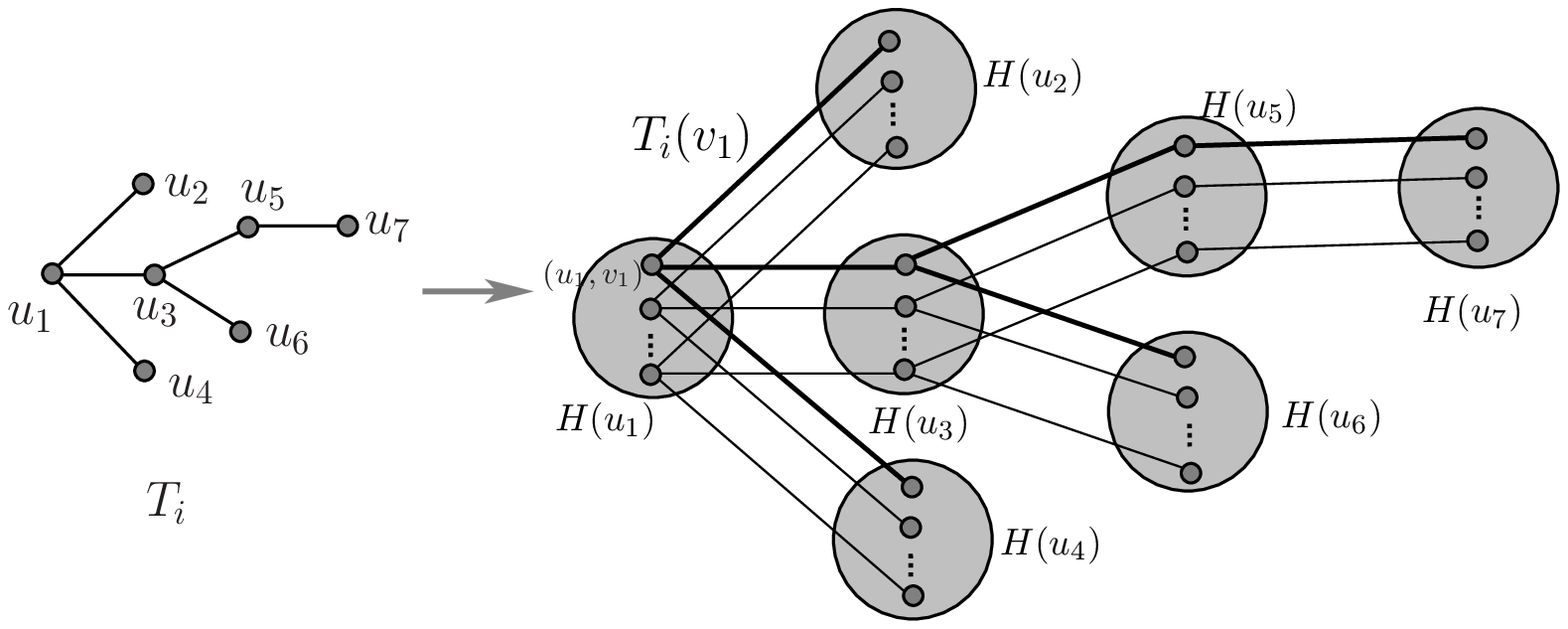}}\\
Figure 1: The parallel subgraph $\mathscr{F}_i$ in $G\Box H$
corresponding to the tree $T_i$ in $G$.
\end{center}
\end{figure}

Throughout this paper, let $\sigma(G)=k$, $\sigma(H)=\ell$, and
$T_1,T_2,\cdots,T_k$ be $k$ spanning trees in $G$ and
$T_1',T_2',\cdots,T_{\ell}'$ be $\ell$ spanning trees in $H$. For
the spanning tree $T_i \ (1\leq i\leq k)$ in $G$, we define a
spanning subgraph (see Figure $1$ for an example) of $G\Box H$ as
follows: $\mathscr{F}_i=\bigcup_{v_j\in V(H)}T_i(v_j)$, where
$T_i(v_j)$ is the corresponding tree of $T_i$ in $G(v_j)$. We call
each of $\mathscr{F}_i \ (1\leq i\leq k)$ a \emph{parallel subgraph
of $G\Box H$ corresponding to the tree $T_i$ in $G$}. For a spanning
tree $T_j'$ in $H$, we define a spanning subgraph of $G\Box H$ as
follows: $\mathscr{F}_j'=\bigcup_{u_i\in V(G)}T_j'(u_i)$, where
$T'_j(u_i)$ is the corresponding tree of $T_j' \ (1\leq i\leq \ell)$
in $H(u_i)$. We also call each of $\mathscr{F}_j' \ (1\leq i\leq
\ell)$ a \emph{parallel subgraph of $G\Box H$ corresponding to the
tree $T_j'$ in $H$}.

The following observation is helpful for understanding our main
result.

\begin{observation}\label{obs1}

Let $\mathscr{T}=\{T_1,T_2,\cdots,T_k\}$ be the set of spanning
trees of $G$, and $\mathscr{T}'=\{T_1',T_2',\cdots,T_{\ell}'\}$  be
the set of spanning trees of $H$. Then

$(1)$ $\underset{T\in \mathscr{T},T'\in \mathscr{T}'}{\bigcup} T\Box
T'\subseteq G\Box H$;

$(2)$ $E(T_i\Box T')\cap E(T_j\Box T')=\bigcup_{u\in V(G)}E(T'(u))$
for $T'\in \mathscr{T}'$ and $T_i,T_j\in \mathscr{T} \ (i\neq j)$;

$(3)$ if $G=\underset{T\in \mathscr{T}}{\bigcup}T$ and
$H=\underset{T'\in \mathscr{T}'}{\bigcup}T'$, then $\underset{T\in
\mathscr{T},T'\in \mathscr{T}'}{\bigcup} T\Box T'=G\Box H$.
\end{observation}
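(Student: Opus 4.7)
The three claims all flow directly from the definition of the Cartesian product, so my plan is to set up a single bookkeeping device that separates the edges of $T_i\Box T'_j$ into the two natural types and then invoke this decomposition in each part. For an arbitrary $T\in\mathscr{T}$ and $T'\in\mathscr{T}'$, I will write
\[
E(T\Box T')=\underbrace{\bigl\{((u,v),(u',v))\,:\,uu'\in E(T),\ v\in V(H)\bigr\}}_{G\text{-type edges}}\ \cup\ \underbrace{\bigl\{((u,v),(u,v'))\,:\,vv'\in E(T'),\ u\in V(G)\bigr\}}_{H\text{-type edges}},
\]
and similarly for $E(G\Box H)$. Everything reduces to matching these two classes.

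For (1), since $T\subseteq G$ and $T'\subseteq H$, every $G$-type (resp.\ $H$-type) edge of $T\Box T'$ sits inside the $G$-type (resp.\ $H$-type) edges of $G\Box H$, whence $T\Box T'\subseteq G\Box H$ and the union is contained in $G\Box H$ as well.

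For (2), I fix $T'\in\mathscr{T}'$ and compare $T_i\Box T'$ with $T_j\Box T'$ using the decomposition. The $H$-type edges of both products are exactly $\{((u,v),(u,v')):vv'\in E(T'),\,u\in V(G)\}$, which coincides with $\bigcup_{u\in V(G)}E(T'(u))$; so these edges lie in the intersection. The $G$-type edges of $T_i\Box T'$ lie on $E(T_i)$ in the first coordinate and those of $T_j\Box T'$ lie on $E(T_j)$, and since the spanning trees in $\mathscr{T}$ are edge-disjoint we have $E(T_i)\cap E(T_j)=\emptyset$, so no $G$-type edge is common. This gives the stated equality.

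For (3), the inclusion $\subseteq$ is (1). For the reverse, I take any edge $e\in E(G\Box H)$. If $e$ is $G$-type, write $e=((u,v),(u',v))$ with $uu'\in E(G)$; by hypothesis $G=\bigcup_{T\in\mathscr{T}}T$, so some $T\in\mathscr{T}$ contains $uu'$, and choosing any $T'\in\mathscr{T}'$ places $e$ in $T\Box T'$. The $H$-type case is symmetric using $H=\bigcup_{T'\in\mathscr{T}'}T'$. Hence every edge of $G\Box H$ appears in some $T\Box T'$, proving equality. No real obstacle is expected here; the only point that needs care is checking in (2) that the edge-disjointness of $T_i$ and $T_j$ is precisely what kills the $G$-type contribution to the intersection.
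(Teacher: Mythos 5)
Your proof is correct. The paper states this observation without any proof at all, and your edge-type decomposition (splitting $E(T\Box T')$ into $G$-type and $H$-type edges) is exactly the routine verification the authors take for granted; in particular you rightly note that part (2) hinges on the trees in $\mathscr{T}$ being pairwise edge-disjoint, which is the intended reading since $\mathscr{T}$ is the maximum spanning-tree packing of $G$.
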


Let us now give our first result.

\begin{theorem}\label{th1}
For two connected graphs $G$ and $H$, $\sigma(G \Box H)\geq
\sigma(G)+\sigma(H)-1$. Moreover, the lower bound is sharp.
\end{theorem}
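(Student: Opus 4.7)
Let $k=\sigma(G)$ and $\ell=\sigma(H)$, with fixed edge-disjoint spanning tree decompositions $T_1,\ldots,T_k$ of $G$ and $T'_1,\ldots,T'_\ell$ of $H$. I plan to prove the bound by constructing $k+\ell-1$ edge-disjoint spanning trees of $G\Box H$ from the parallel subgraphs $\mathscr{F}_i$ and $\mathscr{F}'_j$ defined before Observation~\ref{obs1}. The key building block is that for any $T_i$ and any $u\in V(G)$, $\mathscr{F}_i\cup T'_1(u)$ is a spanning tree of $G\Box H$: the forest $\mathscr{F}_i$ spans each fiber via $T_i$, and the single column-tree $T'_1(u)$ bridges all fibers through the column $u$. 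Symmetrically, $\mathscr{F}'_j\cup T_1(v)$ is a spanning tree for any $v\in V(H)$.

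The construction I would use is the following. For $i=2,\ldots,k$, set $S_i=\mathscr{F}_i\cup T'_1(u_i)$ with $u_2,\ldots,u_k\in V(G)$ chosen distinct; for $j=2,\ldots,\ell$, set $S'_j=\mathscr{F}'_j\cup T_1(v_j)$ with $v_2,\ldots,v_\ell\in V(H)$ chosen distinct. These $k+\ell-2$ spanning trees are pairwise edge-disjoint: within each family the edge-disjointness of the $T_i$'s (resp.\ $T'_j$'s) together with the distinct column connectors $u_i$ (resp.\ fiber connectors $v_j$) suffice; and across families, the restrictions $i\ge 2$ and $j\ge 2$ keep $T_i$ away from $T_1$ and $T'_j$ away from $T'_1$, preventing any collision in either the $G$- or $H$-direction. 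The $(k+\ell-1)$-th spanning tree $C$ is then assembled from the untouched copies of $T_1$ in the fibers not used for any $S'_j$ and the untouched copies of $T'_1$ in the columns not used for any $S_i$, supplemented if needed by edges of $G$ or $H$ lying outside the chosen decompositions.

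The main obstacle lies in constructing $C$. The natural residual $\bigcup_{v\notin\{v_2,\ldots,v_\ell\}}T_1(v)\cup\bigcup_{u\notin\{u_2,\ldots,u_k\}}T'_1(u)$ is typically disconnected: the $(k-1)(\ell-1)$ corner vertices $\{(u_i,v_j):2\le i\le k,\,2\le j\le\ell\}$ are isolated in it, because all their incident $G$- and $H$-edges were consumed by the $S_i$ and $S'_j$. Overcoming this requires either exploiting edges of $G,H$ that lie outside $\bigcup_i T_i\cup\bigcup_j T'_j$, or a careful local modification (an edge swap) of one of the $S_i$ or $S'_j$ near the corners so as to release the necessary edges for $C$. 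Once such a $C$ is exhibited and its edge-disjointness from the $S_i$ and $S'_j$ is confirmed by the same fiber/column bookkeeping as before, the lower bound follows.

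For sharpness I would invoke Proposition~\ref{pro1}(1): when $n$ is even, $\sigma(K_n\Box C_m)=\lfloor(n+1)/2\rfloor=n/2=\sigma(K_n)+\sigma(C_m)-1$, so the inequality is attained.
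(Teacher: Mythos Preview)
Your framework for the first $k+\ell-2$ trees is clean and correct, and the edge-disjointness check among the $S_i$ and the $S'_j$ is exactly the right bookkeeping. The gap is precisely where you flag it: the last tree $C$ is never actually constructed. You correctly observe that in the residual every corner $(u_i,v_j)$ with $2\le i\le k$, $2\le j\le\ell$ is isolated, and then offer two escape routes, neither of which is carried out. The first route (use edges of $G$ or $H$ lying outside $\bigcup T_i$ or $\bigcup T'_j$) simply fails in general: if $G=K_{2n}$ and $H=K_{2m}$, both graphs decompose \emph{exactly} into spanning trees and there are no spare edges at all, yet this is one of the sharpness cases you must cover. The second route (an unspecified edge swap in some $S_i$ or $S'_j$) is not a proof; any edge you release from $S_i$ must be replaced by another edge reconnecting $S_i$, and that replacement must come from somewhere without breaking disjointness with $C$ and with the other $S_i,S'_j$. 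You have not exhibited such an edge, and in the perfectly-decomposed case every candidate is already committed.

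The paper resolves exactly this difficulty, but by reversing the order of construction. It first builds the ``special'' tree $\widehat{T}$ entirely inside $T_k\Box T'_\ell$, using a half-and-half split of $T'_\ell$ into a subtree $T'_a$ of order $\lceil n_2/2\rceil$ and a complementary forest $F'_b$; along each edge of $T_k$ only about half of the $n_2$ transversal edges are consumed. The leftovers are then counted: $\lfloor (n_1-1)/2\rfloor$ unused copies of $T'_a$, the same number of unused copies of $F'_b$, and $\lfloor n_2/2\rfloor$ unused transversal edges over every edge of $T_k$. Since $k-1\le\lfloor n_1/2\rfloor-1$ and $\ell-1\le\lfloor n_2/2\rfloor-1$, these leftovers suffice to connect each $\mathscr{F}_i$ ($1\le i\le k-1$) and each $\mathscr{F}'_j$ ($1\le j\le\ell-1$) into a spanning tree. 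The point is that the paper designs $\widehat{T}$ so that what remains is \emph{structured} enough to serve as connectors; your proposal builds the easy $k+\ell-2$ trees first and is then left with an unstructured residual that need not be connected. To complete your argument you would have to supply a concrete construction of $C$ (or a concrete swap scheme) that works even when $G$ and $H$ have no edges outside their spanning-tree packings.

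Your sharpness argument via Proposition~\ref{pro1}(1) is fine.
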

\begin{proof}
Let $V(G)=n_1$, $V(H)=n_2$, $\sigma(G)=k$ and $\sigma(H)=\ell$.
Since $\sigma(G)=k$, there exist $k$ spanning trees in $G$, say
$T_1,T_2,\cdots,T_{k}$. Clearly, $k\leq
\lfloor\frac{n_1}{2}\rfloor$. Since $\sigma(H)=\ell$, there exist
$\ell$ spanning trees in $H$, say $T_1',T_2',\cdots,T_{\ell}'$.
Clearly, $\ell\leq \lfloor\frac{n_2}{2}\rfloor$.

Pick up two spanning trees $T_k$ and $T_{\ell}'$ of $G$ and $H$,
respectively. Consider the graph $T_k\Box T_{\ell}'$. We will find
an our desired spanning tree of $G\Box H$ from $T_k\Box T_{\ell}'$
by a few steps.

First, we focus on the spanning tree $T_{\ell}'$ of $H$. We
successively delete some leaves of $T_{\ell}'$ to obtain a subtree
$T_{a}'$ of order $\lceil\frac{n_2}{2}\rceil$ in $T_{\ell}'$, and
the induced subgraph of all the deleted edges in $T_{\ell}'$ is a
forest, say $F_{b}'$. For example, we consider the tree $T_{\ell}'$
shown in Figure 2 $(a)$. Clearly, $|V(T_{\ell}')|=7$. We
successively delete the leaves $v_1v_4,v_6v_2,v_6v_3$, and obtain
the tree $T_a'=v_4v_5\cup v_4v_6\cup v_4v_7$ (see Figure 2 $(b)$)
and the forest $F_b'=v_1v_4\cup v_6v_2\cup v_6v_3$ (see Figure 2
$(c)$).

\begin{figure}[h,t,b,p]
\begin{center}
\scalebox{0.8}[0.8]{\includegraphics{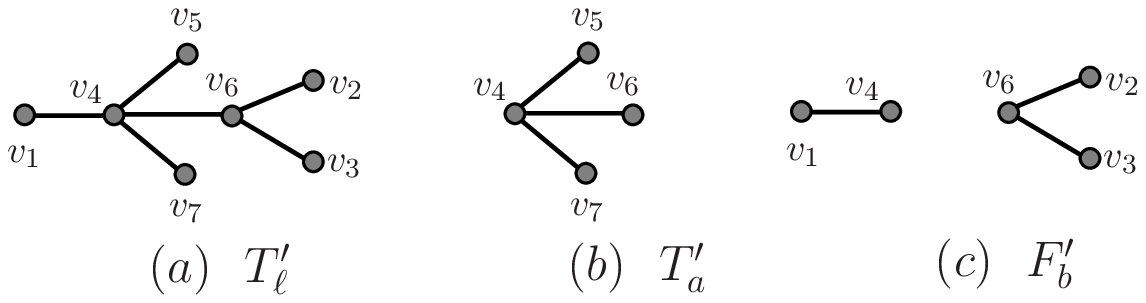}}\\
Figure 2: An example for deleting some leaves from a spanning tree
of $H$.
\end{center}
\end{figure}

Let $V(T_k)=V(G)=\{u_1,u_2,\cdots,u_{n_1}\}$. It is clear that there
are $n_1$ copies of the spanning tree $T_{\ell}'$ of $H$ in $T_k\Box
T_{\ell}'$, say
$T_{\ell}'(u_1),T_{\ell}'(u_2),\cdots,T_{\ell}'(u_{n_1})$. From the
above argument, for each $T_{\ell}'(u_i) \ (1\leq i\leq n_1)$, we
can obtain a subtree $T'_{a}(u_i) \ (1\leq i\leq n_1)$ of order
$\lceil\frac{n_2}{2}\rceil$ and a forest $F'_{b}(u_i) \ (1\leq i\leq
n_1)$ in $T_{\ell}'(u_i)$. Without loss of generality, let $u_1$ be
a root of $T_k$. Pick up $T_{\ell}'(u_1)$. Then we pick up
$\lfloor\frac{n_1-1}{2}\rfloor$ copies of $T_{a}'$ from
$T'_{\ell}(u_2),T'_{\ell}(u_3),\cdots,T'_{\ell}(u_{n_1})$, say
$T'_{a}(u_2),T'_{a}(u_3),\cdots,T'_{a}(u_{\lfloor\frac{n_1-1}{2}\rfloor+1})$,
and continue to pick up $\lfloor\frac{n_1-1}{2}\rfloor$ copies of
$F_{b}'$ from $T'_{\ell}(u_{\lfloor\frac{n_1-1}{2}\rfloor+2}),
T'_{\ell}(u_{\lfloor\frac{n_1-1}{2}\rfloor+3}),
\cdots,T'_{\ell}(u_{n_1})$, say
$F'_{b}(u_{\lfloor\frac{n_1-1}{2}\rfloor+2}),
F'_{b}(u_{\lfloor\frac{n_1-1}{2}\rfloor+3}),
\cdots,F'_{b}(u_{n_1})$.

Next, we combine
$T'_{a}(u_2),T'_{a}(u_3),\cdots,T'_{a}(u_{\lfloor\frac{n_1-1}{2}\rfloor+1})$,
$F'_{b}(u_{\lfloor\frac{n_1-1}{2}\rfloor+2}),
F'_{b}(u_{\lfloor\frac{n_1-1}{2}\rfloor+3}), \cdots,\\
F'_{b}(u_{n_1})$ with $T_{\ell}'(u_1)$ by adding some edges to form
a spanning tree of $G\Box H$ in the following way: For two trees
$T'_{\ell}(u_i)$ and $T'_{\ell}(u_j)$ such that $u_iu_j\in E(T_k)$
and $d_{T_k}(u_i,u_1)<d_{T_k}(u_j,u_1)$ (namely, $u_i$ is closer
than $u_j$ to the root $u_1$), we add some edges between
$V(T'_{\ell}(u_i))$ and $V(T'_{\ell}(u_j))$. Note that we can obtain
a subtree $T'_{a}(u_j)$ and a forest $F'_{b}(u_j)$ from
$T'_{\ell}(u_j)$. Let
$V(T_{\ell}'(u_j))=\{(u_j,v_{1}),(u_j,v_{2}),\cdots,(u_j,v_{\lceil\frac{n_2}{2}\rceil}),
(u_j,v_{\lceil\frac{n_2}{2}\rceil+1}),(u_j,v_{\lceil\frac{n_2}{2}\rceil+2}),\cdots,(u_j,v_{n_2})\}$
and $V(T'_{a}(u_j))=\{(u_j,v_{\lfloor\frac{n_2}{2}\rfloor+1}),
(u_j,v_{\lfloor\frac{n_2}{2}\rfloor+2}),\cdots,(u_j,v_{n_2})\}$ and
$V(T_{\ell}'(u_j))\setminus
V(T'_{a}(u_j))=\{(u_j,v_{1}),\\(u_j,v_{2}),\cdots,(u_j,v_{\lfloor\frac{n_2}{2}\rfloor})\}$.
If we have chosen the forest $F'_{b}(u_j)$ from $T_{\ell}'(u_j)$,
then
$E_1(u_i,u_j)=\{(u_i,v_{k})(u_j,v_{k})|\lfloor\frac{n_2}{2}\rfloor\leq
k\leq n_2\}$ is our desired edge set, which implies that we will add
the edges in $E_1(u_i,u_j)$ between $V(T_{\ell}'(u_i))$ and
$V(T_{\ell}'(u_j))$. If we have chosen the tree $T'_{a}(u_j)$ from
$T'_{\ell}(u_j)$, then $E_2(u_i,u_j)=\{(u_i,v_{k})(u_j,v_{k})|1\leq
k\leq \lfloor\frac{n_2}{2}\rfloor+1\}$ is our desired edge set,
which implies that we will add the edges in $E_2(u_i,u_j)$ between
$V(T_{\ell}'(u_i))$ and $V(T_{\ell}'(u_j))$. For the above example,
if we have chosen the forest $F'_{b}(u_j)=(u_j,v_{1})(u_j,v_{4})\cup
(u_j,v_{6})(u_j,v_{2})\cup (u_j,v_{6})(u_j,v_{3})$ from
$T'_{\ell}(u_j)$, then the edge set
$E_1(u_i,u_j)=\{(u_i,v_{4})(u_j,v_{4}),\\
(u_i,v_{5})(u_j,v_{5}),
(u_i,v_{6})(u_j,v_{6}),(u_i,v_{7})(u_j,v_{7})\}$ is our desired one
(see Figure 3 $(a)$). If we have chosen the tree
$T'_{a}(u_j)=(u_j,v_{4})(u_j,v_{5})\cup (u_j,v_{4})(u_j,v_{6})\cup
(u_j,v_{4})(u_j,v_{7})$ from $T'_{\ell}(u_j)$, then the edge set
$E_2(u_i,u_j)=\{(u_i,v_{1})(u_j,v_{1}),(u_i,v_{2})(u_j,v_{2}),
(u_i,v_{3})(u_j,v_{3}),(u_i,v_{4})(u_j,v_{4})\}$ is our desired one;
see Figure 3 $(b)$.

\begin{figure}[h,t,b,p]
\begin{center}
\scalebox{0.7}[0.7]{\includegraphics{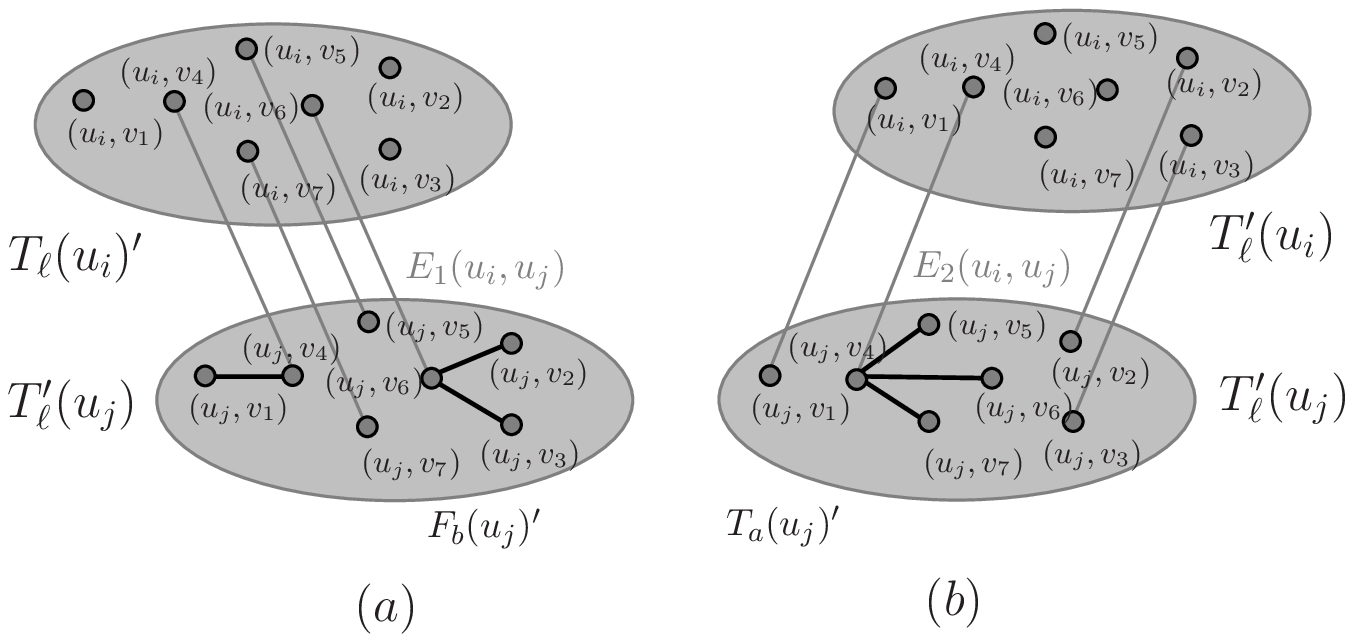}}\\
Figure 3: An example for the procedure of adding edges.
\end{center}
\end{figure}

We continue to complete the above adding edges procedure. In the
end, we obtain a spanning tree of $G\Box H$ in $T_k\Box T'_{\ell}$,
say $\widehat{T}$. An example is given in Figure $4$. Let us focus
on the graph $T_k\Box T'_{\ell}\setminus E(\widehat{T})$. In order
to form the tree $\widehat{T}$, we have used the tree
$T'_{\ell}(u_1)$, the subtrees
$T'_{a}(u_2),T'_{a}(u_3),\cdots,T'_{a}(u_{\lfloor\frac{n_1-1}{2}\rfloor+1})$
and the forests
$F'_{b}(u_{\lfloor\frac{n_1-1}{2}\rfloor+2}),F'_{b}(u_{\lfloor\frac{n_1-1}{2}\rfloor+3})$,
$\cdots,F'_{b}(u_{n_1})$ among
$T_{\ell}'(u_2),T_{\ell}'(u_3),\cdots,T_{\ell}'(u_{n_1})$. So there
are $\lfloor\frac{n_1-1}{2}\rfloor$ copies of $T'_{a}$, namely,
$T'_{a}(u_{\lfloor\frac{n_1-1}{2}\rfloor+2}),
T'_{a}(u_{\lfloor\frac{n_1-1}{2}\rfloor+3}),\cdots,T'_{a}(u_{n_1})$
in $T_k\Box T'_{\ell}\setminus E(\widehat{T})$, and there are also
$\lfloor\frac{n_1-1}{2}\rfloor$ copies of $F'_{b}$, namely,
$F'_{b}(u_{2}),
F'_{b}(u_{3}),\cdots,F'_{b}(u_{\lfloor\frac{n_1-1}{2}\rfloor+1})$ in
$T_k\Box T'_{\ell}\setminus E(\widehat{T})$. We must notice another
fact that, for two trees $T'_{\ell}(u_i)$ and $T'_{\ell}(u_j)$ such
that $u_iu_j\in E(T_k)$ and $d_{T_k}(u_i,u_1)<d_{T_k}(u_j,u_1)$
(namely, $u_i$ is closer than $u_j$ to the root $u_1$), we have used
$\lceil\frac{n_2}{2}\rceil$ edges belonging to $E_1(u_i,u_j)$ or
$E_2(u_i,u_j)$ between $V(T'_{\ell}(u_i))$ and $V(T'_{\ell}(u_j))$
and hence there are
$n_2-\lceil\frac{n_2}{2}\rceil=\lfloor\frac{n_2}{2}\rfloor$
remaining edges belonging to $\overline{E}_1(u_i,u_j)$ or
$\overline{E}_2(u_i,u_j)$ between $V(T'_{\ell}(u_i))$ and
$V(T'_{\ell}(u_j))$ in $T_k\Box T'_{\ell}\setminus E(\widehat{T})$,
where
$\overline{E}_1(u_i,u_j)=E[V(T'_{\ell}(u_i)),V(T'_{\ell}(u_j))]\setminus
E_1(u_i,u_j)$ and
$\overline{E}_2(u_i,u_j)=E[V(T'_{\ell}(u_i)),V(T'_{\ell}(u_j))]\setminus
E_2(u_i,u_j)$. Later, we will use all the above remaining edges to
form some new spanning trees of $G\Box H$.

\begin{figure}[h,t,b,p]
\begin{center}
\scalebox{0.7}[0.7]{\includegraphics{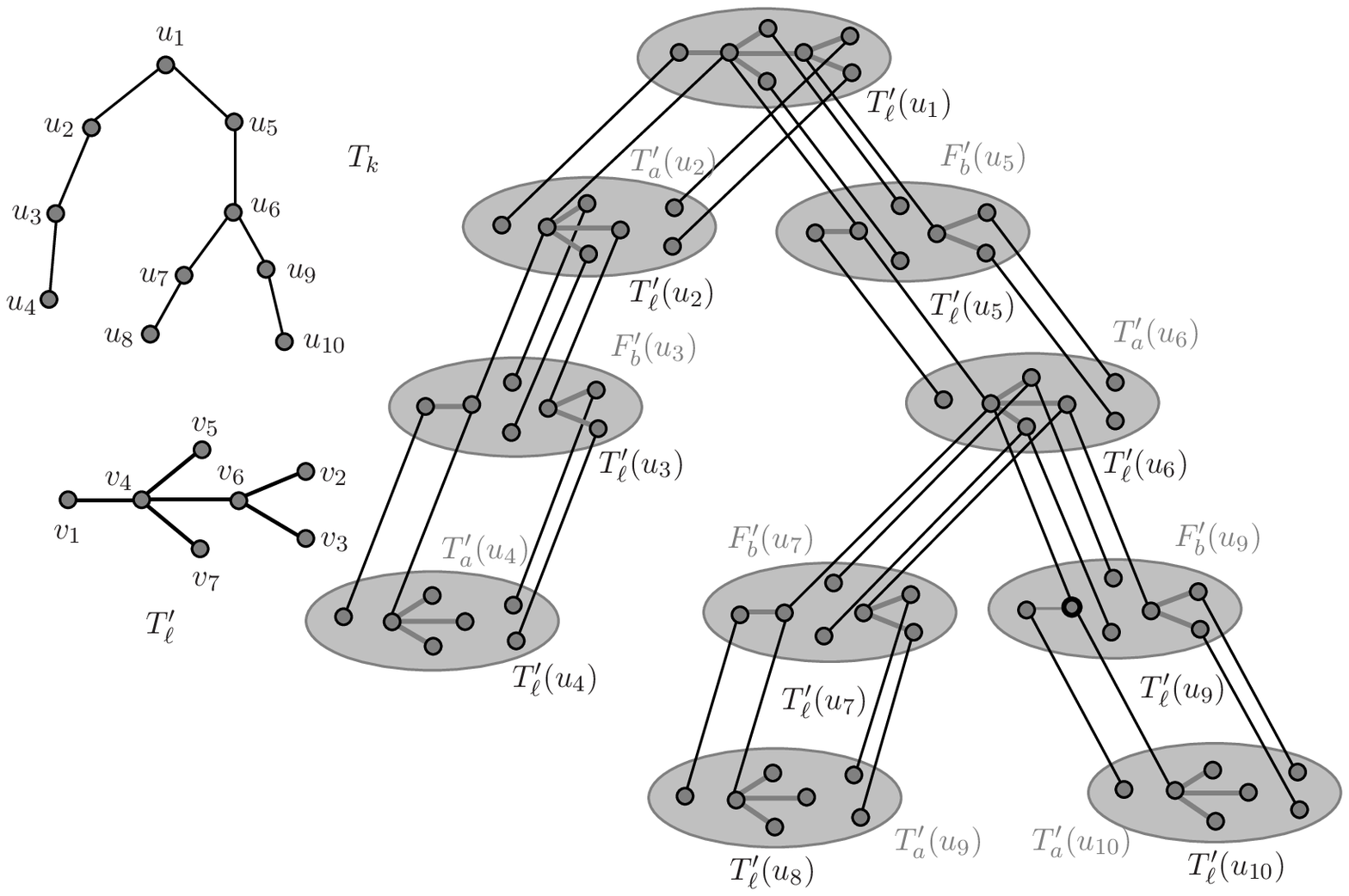}}\\
Figure 4: A spanning tree of $G\Box H$ from $T_i\Box T_j$.
\end{center}
\end{figure}

Let $\mathscr{F}_i \ (1\leq i\leq k-1)$ be the parallel subgraph of
$G\Box H$ corresponding to $T_i \ (1\leq i\leq k-1)$ in $G$. Note
that one of $\mathscr{F}_1,\mathscr{F}_1,\cdots,\mathscr{F}_{k-1}$,
one of $T'_{a}(u_{\lfloor\frac{n_1-1}{2}\rfloor+2}),
T'_{a}(u_{\lfloor\frac{n_1-1}{2}\rfloor+3}),$
$\cdots,T'_{a}(u_{n_1})$ and one of $F'_{b}(u_{2}),
F'_{b}(u_{3}),\cdots,F'_{b}(u_{\lfloor\frac{n_1-1}{2}\rfloor+1})$
can form a spanning tree of $G\Box H$. So we can obtain $k-1$
spanning trees of $G\Box H$ since $k-1\leq
\lfloor\frac{n_1}{2}\rfloor-1$.

Let $\mathscr{F}'_j \ (1\leq j\leq \ell-1)$ be the parallel subgraph
of $G\Box H$ corresponding to $T'_j \ (1\leq j\leq \ell-1)$ in $H$,
where $\mathscr{F}'_j=\bigcup_{u_i\in V(G)}T_j'(u_i)$. Note that one
of $\mathscr{F}'_1,\mathscr{F}'_2,\cdots,\mathscr{F}'_{\ell-1}$ and
one edge of $\overline{E}_1(u_i,u_j)$ or $\overline{E}_2(u_i,u_j)$
for each $u_iu_j\in V(T_k) \ (2\leq i\neq j\leq n_1)$ can form a
spanning tree of  $G\Box H$. Since $\ell-1\leq
\lfloor\frac{n_2}{2}\rfloor -1$ and $|\overline{E}_r(u_i,u_j)|\leq
\lfloor\frac{n_2}{2}\rfloor \ (r=1,2)$, we can obtain $\ell-1$
spanning trees of $G\Box H$.

In the following, we summarize all the edge-disjoint spanning trees
obtained by us.

$\bullet$ $k-1$ spanning trees of $G \Box H$ obtained from the
parallel subgraphs
$\mathscr{F}_1,\mathscr{F}_2,\cdots,\mathscr{F}_{k-1}$, the subtrees
$T'_{a}(u_{\lfloor\frac{n_1-1}{2}\rfloor+2}),
T'_{a}(u_{\lfloor\frac{n_1-1}{2}\rfloor+3}),
\cdots,T'_{a}(u_{n_1})$, and the forests $F'_{b}(u_{2}),
F'_{b}(u_{3}),\cdots,\\
F'_{b}(u_{\lfloor\frac{n_1-1}{2}\rfloor+1})$;

$\bullet$ $\ell-1$ spanning trees of $G\Box H$ obtained from the
parallel subgraphs
$\mathscr{F}_1',\mathscr{F}_2',\cdots,\mathscr{F}_{\ell-1}'$ and the
$\ell-1$ edges in $\overline{E}_1(u_i,u_j)$ or
$\overline{E}_2(u_i,u_j)$ for each $u_iu_j\in E(T_k) \ (1\leq i\neq
j\leq n_1)$;

$\bullet$ one spanning trees $\widehat{T}$ of $G \Box H$ obtained
from the tree $T'_{\ell}(u_{1})$, the subtrees $T'_{a}(u_{2}),
T'_{a}(u_{3}),\\
\cdots,T'_{a}(u_{\lfloor\frac{n_1-1}{2}\rfloor+1})$, and the forests
$F'_{b}(u_{\lfloor\frac{n_1-1}{2}\rfloor+2}),
F'_{b}(u_{\lfloor\frac{n_1-1}{2}\rfloor+3}),\cdots, F'_{b}(u_{n_1})$
and the edges of $E_1(u_i,u_j)$ or $E_2(u_i,u_j)$ for each
$u_iu_j\in E(T_k) \ (1\leq j\leq n_1)$.

From the above arguments, we know that there exist $k+\ell-1$
spanning trees of $G \Box H$, that is, $\sigma(G \Box H)\geq
k+\ell-1=\sigma(G)+\sigma(H)-1$.
\end{proof}

To show the sharpness of the above bound, we consider the following
examples.

\noindent \textbf{Example 1}. $(1)$ Let $G$ and $H$ be two paths of
order $n \ (n\geq 2)$. Clearly, $\sigma(G)=\sigma(H)=1$,
$|V(G)|=|V(H)|=n$. On the one hand, from the above theorem, we have
$\sigma(G \Box H)\geq \sigma(G)+\sigma(H)-1=1$. On the other hand,
since $|E(G\Box H)|=|E(H)||V(G)|+|E(G)||V(H)|=2n(n-1)$, it follows
that $\sigma(G \Box H)\leq \lfloor\frac{2n(n-1)}{n^2-1}\rfloor=1$.
So $\sigma(G \Box H)=\sigma(G)+\sigma(H)-1$;

$(2)$ Let $G=K_{2n}$ and $H=C_{m}$. Clearly, $\sigma(G)=n$,
$\sigma(H)=1$. From $(1)$ of Proposition \ref{pro1}, $\sigma(G \Box
H)=n=\sigma(G)+\sigma(H)-1$;

$(3)$ Let $G=K_{2n}$ and $H=K_{2m}$. Clearly, $\sigma(G)=n$,
$\sigma(H)=m$. From $(2)$ of Proposition \ref{pro1}, $\sigma(G \Box
H)=n+m-1=\sigma(G)+\sigma(H)-1$;

$(4)$ Let $n$ be an odd integer, and $G=Q_{n-1}$ and $H=P_{2}$.
Clearly, $\sigma(G)=\lfloor\frac{n-1}{2}\rfloor$, $\sigma(H)=1$.
From $(3)$ of Proposition \ref{pro1},
$\lfloor\frac{n}{2}\rfloor=\sigma(Q_{n})=\sigma(G \Box
H)=\lfloor\frac{n-1}{2}\rfloor+1-1=\sigma(G)+\sigma(H)-1$.

$(5)$ Let $m,n,r$ be three odd integers such that $m,r$ are even, or
$n$ is odd and $r$ is even, and $G=K_{n(m)}$ and $H=K_{r}$. Clearly,
$\sigma(G)=\lfloor\frac{m(n-1)}{2}\rfloor$,
$\sigma(H)=\lfloor\frac{r}{2}\rfloor$. From $(4)$ of Proposition
\ref{pro1}, $\sigma(G \Box
H)=\lfloor\frac{m(n-1)+r-1}{2}\rfloor=\frac{m(n-1)+r-2}{2}$ and
hence $\sigma(G)+\sigma(H)-1=\frac{m(n-1)}{2}+\frac{r}{2}-1
=\frac{m(n-1)+r-2}{2}=\sigma(G \Box H)$.

\section{For Lexicographic product}

Recall that the \emph{Lexicographic product} of two graphs $G$ and
$H$, written as $G\circ H$, is defined as follows: $V(G\circ
H)=V(G)\times V(H)$. Two distinct vertices $(u,v)$ and $(u',v')$ of
$G\circ H$ are adjacent if and only if either $(u,u')\in E(G)$ or
$u=u'$ and $(v,v')\in E(H)$. Note that unlike the Cartesian Product,
the Lexicographic product is a non-commutative product. Thus $G\circ
H$ need not be isomorphic to $H\circ G$. Clearly, $|E(G\circ
H)|=|E(H)||V(G)|+|E(G)||V(H)|^2$.

The following observation is helpful for understanding our main
result.

\begin{observation}\label{obs2}

Let $\mathscr{T}=\{T_1,T_2,\cdots,T_k\}$ be the set of spanning
trees of $G$, and $\mathscr{T}'=\{T_1',T_2',\cdots,T_{\ell}'\}$  be
the set of spanning trees of $H$. Then

$(1)$ $\underset{T\in \mathscr{T},T'\in \mathscr{T}'}{\bigcup}
T\circ T'\subseteq G\circ H$;

$(2)$ $E(T_i\circ T')\cap E(T_j\circ T')=\bigcup_{u\in
V(G)}E(T'(u))$ for $T'\in \mathscr{T}'$ and $T_i,T_j\in \mathscr{T}
\ (i\neq j)$;

$(3)$ if $G=\underset{T\in \mathscr{T}}{\bigcup}T$ and
$H=\underset{T'\in \mathscr{T}'}{\bigcup}T'$, then $\underset{T\in
\mathscr{T},T'\in \mathscr{T}'}{\bigcup} T\circ T'=G\circ H$.
\end{observation}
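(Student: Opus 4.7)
The plan is to verify all three parts directly from the definition of the lexicographic product, exactly mirroring the proof of Observation \ref{obs1} for the Cartesian case. The key structural fact to keep in front of us is that every edge of $G\circ H$ is of exactly one of two types: a \emph{column-cross} edge $(u,v)(u',v')$ arising from some $uu'\in E(G)$ (where $v,v'$ range over $V(H)$ arbitrarily), or an \emph{in-column} edge $(u,v)(u,v')$ arising from some $vv'\in E(H)$. The same classification applies inside any $T\circ T'$, with $T$ and $T'$ playing the roles of $G$ and $H$.

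For part (1), I would pick an arbitrary edge of $T\circ T'$ and observe that, in either case of the classification, the edge satisfies the $G\circ H$ adjacency rule, because $E(T)\subseteq E(G)$ and $E(T')\subseteq E(H)$; taking the union over $\mathscr{T}\times\mathscr{T}'$ preserves the inclusion. For part (2), I would fix $T'\in\mathscr{T}'$ and two distinct $T_i,T_j\in\mathscr{T}$, and split each of $E(T_i\circ T')$ and $E(T_j\circ T')$ into its column-cross part and its in-column part. The in-column parts coincide and equal exactly $\bigcup_{u\in V(G)}E(T'(u))$, while the column-cross parts are disjoint because $E(T_i)\cap E(T_j)=\emptyset$ (the spanning trees in $\mathscr{T}$ are pairwise edge-disjoint); this gives the claimed equality.

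For part (3), I would establish the reverse inclusion of (1) edge by edge. Given a column-cross edge $(u,v)(u',v')$ in $G\circ H$, the hypothesis $G=\bigcup_{T\in\mathscr{T}}T$ supplies a $T$ with $uu'\in E(T)$, and then $(u,v)(u',v')\in E(T\circ T')$ for any $T'\in\mathscr{T}'$. Given an in-column edge $(u,v)(u,v')$, the hypothesis $H=\bigcup_{T'\in\mathscr{T}'}T'$ supplies a $T'$ with $vv'\in E(T')$, and then the edge lies in $T\circ T'$ for any $T\in\mathscr{T}$. I do not expect any genuine obstacle here: the only point worth flagging is that, unlike the Cartesian product, a single edge of $G$ induces $|V(H)|^2$ edges of $G\circ H$, so the column-cross parts are considerably larger than in Observation \ref{obs1}. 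This enlarges the sets appearing implicitly in (2) but does not alter the structural argument.
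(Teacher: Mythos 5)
Your verification is correct. The paper states Observation \ref{obs2} without proof, and your edge-classification argument (cross edges from $E(G)$ versus in-copy edges from $E(H)$, with the cross parts pairwise disjoint because the trees in $\mathscr{T}$ are edge-disjoint) is exactly the direct check the authors leave to the reader; your remark that each edge of $G$ now contributes $|V(H)|^2$ cross edges rather than $|V(H)|$ correctly identifies the only way this differs from Observation \ref{obs1}.
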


From the definition, the Lexicographic product graph $G\circ H$ is a
graph obtained by replacing each vertex of $G$ by a copy of $H$ and
replacing each edge of $G$ by a complete bipartite graph
$K_{n_2,n_2}$. For an edge $u_iu_j\in E(G) \ (1\leq i,j\leq n_1)$,
the induced subgraph obtained from the edges between the vertex set
$V(H(u_i))=\{(u_i,v_1),(u_i,v_2),\cdots,(u_i,v_{n_2})\}$ and the
vertex set $V(H(u_j))=\{(u_j,v_1),(u_j,v_2),\cdots,(u_j,v_{n_2})\}$
in $G\circ H$ is a complete equipartition bipartite graph of order
$2n_2$, denoted by $K_{H(u_i),H(u_j)}$.

Laskar and Auerbach \cite{LA} obtained the following result.

\begin{proposition}\cite{LA} \label{pro2}
For all even $r\geq 2$, $K_{r,r}$ is the union of $\frac{1}{2}r$ of
its Hamiltonian cycles.
\end{proposition}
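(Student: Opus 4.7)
The plan is to exhibit an explicit Hamiltonian decomposition of $K_{r,r}$ when $r=2s$, using the well-known cyclic $1$-factorization of a complete bipartite graph and then pairing the factors appropriately. Label the two parts of $K_{r,r}$ as $A=\{a_0,a_1,\ldots,a_{r-1}\}$ and $B=\{b_0,b_1,\ldots,b_{r-1}\}$ with all indices read modulo $r$.

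First, I would introduce the standard $1$-factorization: for $0\le i\le r-1$, set
\[
M_i=\{\,a_j b_{j+i}\ :\ 0\le j\le r-1\,\}.
\]
Each $M_i$ is a perfect matching of $K_{r,r}$, and the $M_i$ partition $E(K_{r,r})$, so $K_{r,r}=M_0\cup M_1\cup\cdots\cup M_{r-1}$. Next, I would pair consecutive matchings: for $0\le k\le s-1$, define
\[
C_k=M_{2k}\cup M_{2k+1}.
\]
Clearly the $C_k$ are edge-disjoint and together cover $E(K_{r,r})$, and each $C_k$ is $2$-regular, hence a disjoint union of even cycles.

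The key step is to show that each $C_k$ is in fact a single cycle of length $2r$, i.e.\ Hamiltonian. Starting at $a_0$ and alternating edges of $M_{2k+1}$ and $M_{2k}$, the trail visits
\[
a_0\ \to\ b_{2k+1}\ \to\ a_1\ \to\ b_{2k+2}\ \to\ a_2\ \to\ b_{2k+3}\ \to\ \cdots
\]
so after $2t$ steps we are at $a_t$, and the walk returns to $a_0$ only when $t\equiv 0\pmod r$. Hence the cycle has length exactly $2r$ and meets every vertex of $A\cup B$, so $C_k$ is Hamiltonian.

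The only real subtlety is checking that the union of two $1$-factors $M_i\cup M_{i'}$ yields a single cycle rather than several short ones; in general this depends on $\gcd(i-i',r)$, and here we have arranged $i-i'=1$ so the condition $\gcd(1,r)=1$ holds automatically. Combining the three steps, $C_0,C_1,\ldots,C_{s-1}$ give the desired decomposition of $K_{r,r}$ into $s=\tfrac{r}{2}$ edge-disjoint Hamiltonian cycles.
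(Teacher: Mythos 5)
Your construction is correct: the cyclic $1$-factorization $M_i=\{a_jb_{j+i}\}$ does partition $E(K_{r,r})$, each $C_k=M_{2k}\cup M_{2k+1}$ is $2$-regular, and your alternating-walk computation correctly shows that the difference of the two matching indices being $1$ (coprime to $r$) forces a single cycle of length $2r$. Note, however, that the paper does not prove this proposition at all --- it is quoted from Laskar and Auerbach, whose original result is the more general statement that complete multipartite graphs with all parts of suitable size decompose into edge-disjoint Hamiltonian circuits. So there is no in-paper argument to compare against; what you have supplied is the standard elementary, self-contained proof for the special case $K_{r,r}$, which is arguably preferable here since the paper only ever uses the bipartite case (and, in the discussion following the proposition, the finer fact that the $n_2$ matchings can be grouped in pairs into Hamiltonian cycles --- a fact your explicit factorization delivers directly, whereas a bare citation does not).
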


From their result, $K_{H(u_i),H(u_j)}$ can be decomposed into
$\frac{1}{2} n_2$ Hamiltonian cycles for $n_2$ even, or $\frac{1}{2}
(n_2-1)$ Hamiltonian cycles and one perfect matching for $n_2$ odd.
Therefore, $K_{H(u_i),H(u_j)}$ can be decomposed into $n_2$ perfect
matchings $M_1,M_2,\cdots,M_{n_2}$ of $K_{H(u_i),H(u_j)}$ such that
$C_i=M_{2i-1} \cup M_{2i} \ (1\leq i\leq
\lfloor\frac{n_2}{2}\rfloor)$ is a Hamiltonian cycle of
$K_{H(u_i),H(u_j)}$. We call each $C_i$ an \emph{perfect cycle}.
Furthermore, $K_{H(u_i),H(u_j)}$ can be decomposed into $x$ perfect
cycles and $n_2-2x$ perfect matchings.

Since $\tau(G)=k$, there exist $k$ spanning trees in $G$, say
$T_1,T_2,\cdots,T_{k}$. For each $T_i \ (1\leq i\leq k)$, there is
spanning subgraph $\mathscr{T}_i \ (1\leq i\leq k)$ in $G\circ H$
corresponding to the spanning tree $T_i$ in $G$; see Figure $5$. As
we know, $K_{n_2,n_2}$ can be decomposed into $n_2$ perfect
matchings. So each such spanning subgraph $\mathscr{T}_i \ (1\leq
i\leq k)$ can be decomposed into $n_2$ parallel subgraphs
corresponding to the spanning tree $T_i$ in $G$, say
$\mathscr{F}_{i,1},\mathscr{F}_{i,2},\cdots,\mathscr{F}_{i,n_2}$.
Furthermore, we can decompose each $\mathscr{T}_i \ (1\leq i\leq k)$
into $n_2$ parallel subgraphs
$\mathscr{F}_{i,1},\mathscr{F}_{i,2},\cdots,\mathscr{F}_{i,n_2}$
such that $\mathscr{F}_{i,2j-1}\cup \mathscr{F}_{i,2j} \ (1\leq
j\leq \lfloor\frac{n_2}{2}\rfloor)$ contains $n_1-1$ perfect cycles.

\begin{figure}[h,t,b,p]
\begin{center}
\scalebox{0.7}[0.7]{\includegraphics{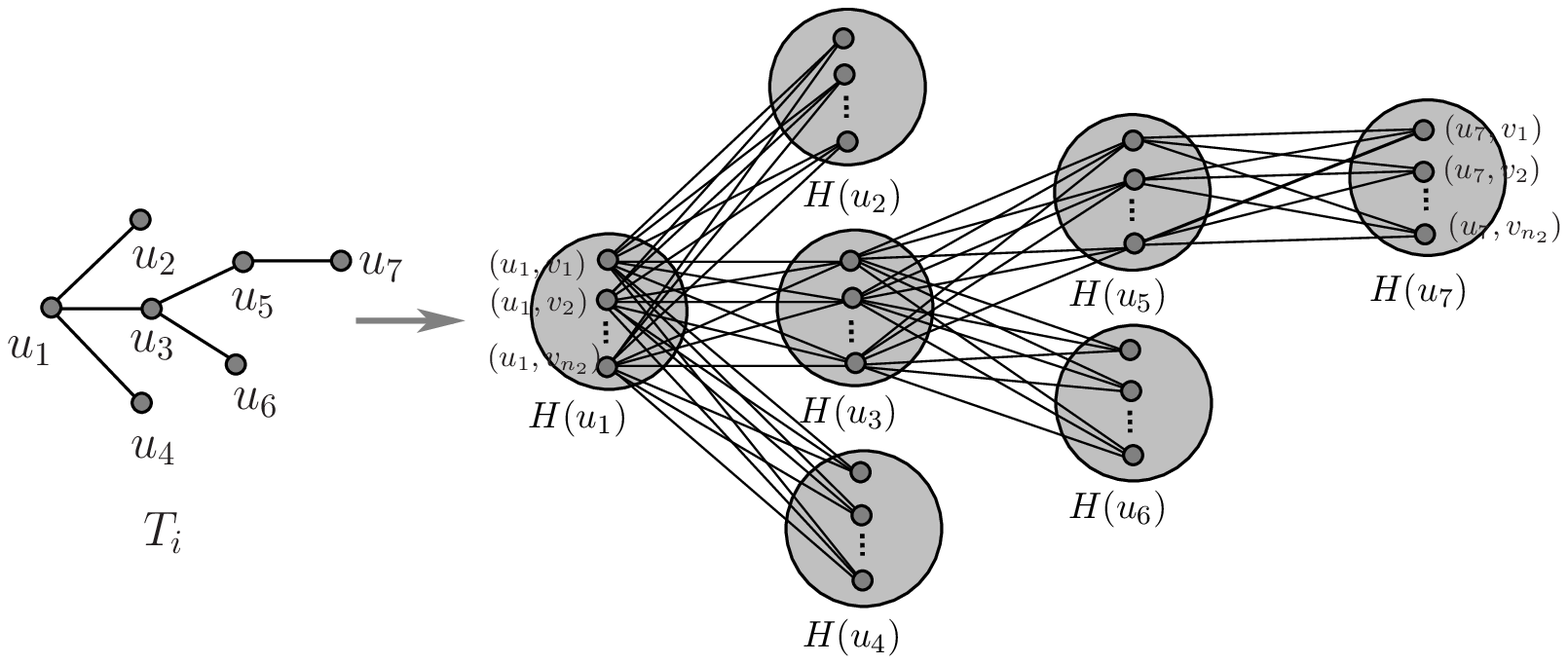}}\\
Figure 5: The spanning subgraph $\mathscr{T}_i$ in $G\circ H$
corresponding to the tree $T_i$ in $G$.
\end{center}
\end{figure}

For more clear, we give the following observation.

\begin{observation}\label{obs3}
Let $T$ be a tree, $H$ be a connected graph of order $n$. Then all
the edges of $T\circ H$ corresponding to the edges of $T$ can be
discomposed into $n$ parallel subgraphs of $T\circ H$ corresponding
to the tree $T$, say
$\mathscr{F}_{1},\mathscr{F}_{2},\cdots,\mathscr{F}_{n}$, such that
there exist $2x$ parallel subgraphs
$\mathscr{F}_{1},\mathscr{F}_{2},\cdots,\mathscr{F}_{2x}$ such that
$\mathscr{F}_{2j-1}\cup \mathscr{F}_{2j} \ (1\leq j\leq x\leq
\lfloor\frac{n_2}{2}\rfloor)$ contains exact $n_1-1$ perfect cycles.
\end{observation}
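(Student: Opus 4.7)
\medskip
\noindent\textbf{Proof plan for Observation~\ref{obs3}.}
The plan is to transfer the one-bipartite-piece decomposition set up just before the observation to every edge of $T$ simultaneously, turning Proposition~\ref{pro2} into the required global statement about $T\circ H$.

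First I would handle each edge of $T$ in isolation. For every $uv\in E(T)$, the edges of $T\circ H$ between $H(u)$ and $H(v)$ form the complete bipartite graph $K_{H(u),H(v)}\cong K_{n_2,n_2}$. By Proposition~\ref{pro2}, together with the refinement into perfect cycles plus perfect matchings recorded right after it, I can partition
\[
E(K_{H(u),H(v)}) \;=\; M_1^{uv}\cup M_2^{uv}\cup\cdots\cup M_{n_2}^{uv}
\]
into $n_2$ perfect matchings such that $M_{2j-1}^{uv}\cup M_{2j}^{uv}$ is a Hamiltonian cycle of $K_{H(u),H(v)}$ (that is, a perfect cycle) for every $1\le j\le\lfloor n_2/2\rfloor$. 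This labelling can be chosen independently on each edge of $T$, provided I adopt the uniform convention that the index pairs $(2j-1,2j)$ are the ones producing perfect cycles.

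Next I would glue the local matchings into global parallel subgraphs. Define
\[
\mathscr{F}_i \;:=\; \bigcup_{uv\in E(T)} M_i^{uv},\qquad i=1,2,\ldots,n_2.
\]
By construction each $\mathscr{F}_i$ contains exactly one perfect matching between $H(u)$ and $H(v)$ for every edge $uv\in E(T)$, so it qualifies as a parallel subgraph of $T\circ H$ corresponding to the tree $T$; collectively the $\mathscr{F}_i$ partition the set of edges of $T\circ H$ coming from edges of $T$, giving the desired decomposition into $n_2$ parallel subgraphs.

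Finally, I would read off the perfect-cycle count. For any $1\le j\le\lfloor n_2/2\rfloor$,
\[
\mathscr{F}_{2j-1}\cup\mathscr{F}_{2j} \;=\; \bigcup_{uv\in E(T)}\bigl(M_{2j-1}^{uv}\cup M_{2j}^{uv}\bigr),
\]
which is the edge-disjoint union of one Hamiltonian cycle of $K_{H(u),H(v)}$ per edge of $T$, i.e.\ exactly $|E(T)|=n_1-1$ perfect cycles. Choosing any $x\in\{1,\ldots,\lfloor n_2/2\rfloor\}$ then witnesses the statement. There is no genuine obstacle here: the argument is essentially Proposition~\ref{pro2} applied edge-by-edge, and the only piece of bookkeeping is to coordinate the matching labels across different edges of $T$ so that the Hamiltonian cycles appear simultaneously for each pair $(2j-1,2j)$.
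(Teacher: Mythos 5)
Your argument is correct and is essentially the paper's own: the observation is obtained there exactly as you do, by applying the Laskar--Auerbach decomposition to each $K_{H(u),H(v)}\cong K_{n_2,n_2}$ separately, pairing the $n_2$ perfect matchings into Hamiltonian (``perfect'') cycles, and gluing one matching per edge of $T$ into each $\mathscr{F}_i$. The only step both treatments leave implicit is that each $\mathscr{F}_i$ so defined is genuinely a parallel subgraph, i.e.\ a disjoint union of $n_2$ copies of $T$; this holds because any cycle in a union of perfect matchings indexed by the edges of $T$ would project to a nontrivial non-backtracking closed walk in the tree $T$, which is impossible.
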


After the above preparations, we now give our result.

\begin{theorem}\label{th2}
Let $G$ and $H$ be two connected graphs. $\sigma(G)=k$,
$\sigma(H)=\ell$, $|V(G)|=n_1$, and $|V(H)|=n_2$. Then

$(1)$ if $k n_2=\ell n_1$, then $\sigma(G \circ H)\geq k n_2(=\ell
n_1)$;

$(2)$ if $\ell n_1>k n_2$, then $\sigma(G \circ H)\geq
kn_2-\lceil\frac{k n_2-1}{n_1}\rceil+\ell-1$;

$(3)$ if $\ell n_1<k n_2$, then $\sigma(G \circ H)\geq
kn_2-2\lceil\frac{kn_2-1}{n_1+1}\rceil+\ell-1$.

Moreover, the lower bounds are sharp.
\end{theorem}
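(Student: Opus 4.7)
My plan is to build two basic kinds of spanning trees of $G\circ H$ and mix them in proportions controlled by the balance between $kn_2$ and $\ell n_1$. The first kind, call it a \emph{$G$-parallel tree}, augments a parallel subgraph $\mathscr{F}_{i,j}$ (a spanning forest whose $n_2$ components are vertex-disjoint copies of $T_i$, each meeting every $H(u_s)$ in exactly one vertex) by a spanning tree $T'_r(u_s)$ of a single copy $H(u_s)$; the added $n_2-1$ edges join the $n_2$ vertices of $H(u_s)$, which lie in distinct components of $\mathscr{F}_{i,j}$, and glue the forest into a spanning tree of $G\circ H$. The second kind, an \emph{$H$-parallel tree}, augments a parallel subgraph $\mathscr{F}'_r=\bigcup_s T'_r(u_s)$ (whose $n_1$ components are the copies $H(u_s)$) by the $n_1-1$ cross-edges of a single component of some unused $\mathscr{F}_{i,j}$; that component is a copy of $T_i$ meeting each $H(u_s)$ once, so its cross-edges act as a macroscopic spanning tree on the $n_1$ components of $\mathscr{F}'_r$. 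Edge-disjointness across the two kinds is automatic since cross-edges and $H(u_s)$-internal edges are disjoint, and within each kind it follows from the construction of the $\mathscr{F}_{i,j}$'s and $T'_r(u_s)$'s together with Observation~\ref{obs2}.

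For Case~(1), $kn_2=\ell n_1$, I would fix any bijection between the $kn_2$ parallel subgraphs $\mathscr{F}_{i,j}$ and the $\ell n_1$ tree copies $T'_r(u_s)$ and produce $kn_2$ edge-disjoint $G$-parallel trees. For Case~(2), $\ell n_1>kn_2$, I would reserve $\lceil(kn_2-1)/n_1\rceil$ of the $\mathscr{F}_{i,j}$'s to serve as cross-edge suppliers for $\ell-1$ $H$-parallel trees built from $\mathscr{F}'_1,\dots,\mathscr{F}'_{\ell-1}$, and convert the remaining $kn_2-\lceil(kn_2-1)/n_1\rceil$ parallel subgraphs into $G$-parallel trees. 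The augmenting H-tree copies for the $G$-parallel trees would be drawn from the $T'_r(u_s)$'s so that their usage across $H(u_1),\dots,H(u_{n_1})$ is as balanced as possible; this even distribution is exactly what leaves at least $\ell-1$ free spanning trees in each $H(u_s)$ to support the $H$-parallel trees.

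Case~(3), $\ell n_1<kn_2$, is the crux because the H-tree supply cannot augment every $\mathscr{F}_{i,j}$ one at a time. Here I would invoke the perfect-cycle refinement of Observation~\ref{obs3} and pair parallel subgraphs as $\mathscr{F}_{i,2j-1}\cup\mathscr{F}_{i,2j}$, whose union contains a Hamilton cycle in $K_{H(u_a),H(u_b)}$ for each edge $u_au_b\in E(T_i)$. Each such paired block can be repackaged into two spanning trees of $G\circ H$ that jointly consume the H-tree capacity of only $n_1+1$ copies of $H$, rather than the $2n_1$ needed by two independent $G$-parallel trees; this Hamilton-cycle saving accounts for the denominator $n_1+1$ in $\lceil(kn_2-1)/(n_1+1)\rceil$, while the factor of $2$ reflects that each repackaging covers two $\mathscr{F}_{i,j}$'s. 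The main technical obstacle, and the step I expect to be the most delicate, is the combinatorial bookkeeping that verifies edge-disjointness between the surviving $G$-parallel trees, the paired-block trees, and the $\ell-1$ $H$-parallel trees, and confirms that in every $H(u_s)$ the H-tree consumption still leaves room for the $\ell-1$ $H$-parallel trees. Sharpness of all three bounds can then be exhibited by concrete products such as those covered by Proposition~\ref{pro1}.
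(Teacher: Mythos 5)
Your overall architecture---\emph{$G$-parallel trees} (a forest $\mathscr{F}_{i,j}$ glued into a spanning tree by one tree copy $T'_r(u_s)$) versus \emph{$H$-parallel trees} (a forest $\mathscr{F}'_r$ glued by the cross-edges of one component of a $G$-side forest)---is exactly the paper's, and your Case (1) coincides with its proof. But your Case (2) allocation is infeasible. Once you spend $\mathscr{F}'_1,\dots,\mathscr{F}'_{\ell-1}$ whole on $\ell-1$ $H$-parallel trees, every copy $H(u_s)$ has only the single spanning tree $T'_\ell(u_s)$ left, so your ``balanced'' draw can supply at most $n_1$ augmenting tree copies and hence at most $n_1$ $G$-parallel trees; you get at most $n_1+\ell-1$ trees in total, whereas the claimed bound requires $kn_2-\lceil\frac{kn_2-1}{n_1}\rceil$ $G$-parallel trees, which exceeds $n_1$ in general (take $G$ a tree on $n_1=10$ vertices and $H=K_{20}$: then $k=1$, $\ell=10$, $\ell n_1=100>20=kn_2$, and $kn_2-\lceil\frac{19}{10}\rceil=18>10$). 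The paper's split is the dual of yours and is the one that works: dismantle $x=\lceil\frac{kn_2-1}{n_1}\rceil$ of the $H$-side parallel subgraphs into their $xn_1\ge kn_2-1$ tree copies to build $kn_2-1$ $G$-parallel trees, and reserve the single $G$-side subgraph $\mathscr{F}_{k,n_2}$ so that its $n_2\ge\ell$ components supply the cross-edge skeletons for the remaining $\ell-x$ $H$-parallel trees, giving the same total $(kn_2-1)+(\ell-x)$.

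Your Case (3) also misidentifies the mechanism behind the denominator $n_1+1$. The paired block $\mathscr{F}_{i,2j-1}\cup\mathscr{F}_{i,2j}$ is not ``repackaged into two spanning trees consuming the $H$-tree capacity of $n_1+1$ copies''; in the paper it is sacrificed outright to yield $n_1-1$ perfect (Hamiltonian) cycles, one in $K_{H(u_a),H(u_b)}$ for each edge $u_au_b$ of $T_i$, and each such cycle---being connected and meeting every component of any other parallel subgraph $\mathscr{F}_{i',j'}$---glues that subgraph into a spanning tree using no $H$-tree copy at all. Sacrificing $2x$ subgraphs yields $x(n_1-1)$ cycles to glue the remaining $kn_2-1-2x$ subgraphs, and the requirement $x(n_1-1)\ge kn_2-1-2x$ is what produces $x=\lceil\frac{kn_2-1}{n_1+1}\rceil$; meanwhile all $\ell$ (not $\ell-1$) $H$-side subgraphs become $H$-parallel trees via components of the reserved $\mathscr{F}_{k,n_2}$, for a total of $(kn_2-1-2x)+\ell$. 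Since you explicitly defer the bookkeeping that is the entire content of this case, and your stated accounting does not yield the factor $2$ or the denominator $n_1+1$ for the right reason, Cases (2) and (3) both need to be reworked along these lines; the sharpness examples (small products such as $P_3\circ K_4$ and $K_4^-\circ P_3$ compared against the edge-count upper bound $\lfloor|E(G\circ H)|/(n_1n_2-1)\rfloor$) also still need to be supplied.
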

\begin{proof}
$(1)$ Since $\sigma(G)=k$, there exist $k$ spanning tree in $G$, say
$T_1,T_2,\cdots,T_k$. Then there exist parallel subgraphs
$\mathscr{F}_{i,j} \ (1\leq i\leq k, 1\leq j\leq n_2)$ in $G \circ
H$ corresponding to the spanning tree $T_i \ (1\leq i\leq k)$ in
$G$. Since $\sigma(H)=\ell$, there exist $\ell$ spanning trees of
$H$, say $T'_1,T'_2,\cdots,T'_{\ell}$. Then, for a spanning tree
$T_j' \ (1\leq j\leq \ell)$ in $H$, there is parallel subgraph
$\mathscr{F}_j'=\bigcup_{u_i\in V(G)}T_j'(u_i)$ in $G \circ H$
corresponding to the spanning tree $T'_j$ of $H$, where $T'_j(u_i)$
is the corresponding tree of $T_j'$ in $H(u_i)$. So there are $\ell
n_1$ such trees $T_j'(u_i) \ (1\leq i\leq n_1, 1\leq j\leq \ell)$ in
$G \circ H$. Because one tree of $\{T_j'(u_i)|1\leq i\leq n_1, 1\leq
j\leq \ell\}$ and one of $\{\mathscr{F}_{i,j}|1\leq i\leq k, 1\leq
j\leq n_2\}$ can form a spanning tree of $G \circ H$, we can get $k
n_2=\ell n_1$ spanning trees in $G\circ H$, namely, $\sigma(G \circ
H)\geq k n_2(=\ell n_1)$.

$(2)$ Since $\sigma(G)=k$, there exist $k$ spanning tree in $G$, say
$T_1,T_2,\cdots,T_k$. Then there exist parallel subgraphs
$\mathscr{F}_{i,j} \ (1\leq i\leq k, 1\leq j\leq n_2)$ in $G \circ
H$ corresponding to the spanning tree $T_i \ (1\leq i\leq k)$ in
$G$. We pick up $\mathscr{F}_{k,n_2}$. Note that
$\mathscr{F}_{k,n_2}=\bigcup_{v_i\in V(H)}T_{k}(v_i)$, where
$T_{k}(v_i)$ is the corresponding tree of $T_{k}$ in $G(v_i) \ 1\leq
i\leq n_2$. Thus we can obtain $n_2$ such trees isomorphic to the
spanning tree $T_{k}$ of $H$ from $\mathscr{F}_{k,n_2}$, say
$T_{k}(v_1),T_{k}(v_2),\cdots,T_{k}(v_{n_2})$. Since $\tau(H)=\ell$,
there exist $\ell$ spanning tree in $G$, say
$T'_1,T'_2,\cdots,T'_{\ell}$. Then there exist parallel subgraphs
$\mathscr{F}_j'=\bigcup_{u_i\in V(G)}T_j'(u_i) \ (1\leq j\leq \ell)$
in $G \circ H$ corresponding to the spanning tree $T_j'$ in $H$,
where $T'_j(u_i)$ is the corresponding tree of $T_j'$ in $H(u_i)$.
Pick up $x$ parallel subgraphs, without loss of generality, let them
be $\mathscr{F}_1',\mathscr{F}_2',\cdots,\mathscr{F}_{x}'$. We can
obtain $xn_1$ trees $T_j'(u_i) \ (1\leq j\leq x, 1\leq i\leq n_1)$
isomorphic to the tree $T_j'$ in $H$. Note that each of
$\{\mathscr{F}_{i,j}|1\leq i\leq k, 1\leq j\leq n_2\}\setminus
\mathscr{F}_{k,n_2}$ and each of the trees $T_j'(u_i) \ (1\leq j\leq
x, 1\leq i\leq n_1)$ can form a spanning tree of $G \circ H$. If
$xn_1\geq kn_2-1$, then we can obtain $kn_2-1$ spanning trees of $G
\circ H$. Consider the remaining $\ell-x$ parallel subgraphs
$\mathscr{F}_{x+1}',\mathscr{F}_{x+2}',\cdots,\mathscr{F}_{\ell}'$.
Note that each of them and each of the trees
$T_{k}(v_1),T_{k}(v_2),\cdots,T_{k}(v_{n_2})$ can form a spanning
tree of $G \circ H$. Since $\ell-x\leq \ell\leq
\lfloor\frac{n_2}{2}\rfloor\leq n_2$, we can obtain $\ell-x$
spanning trees of $G \circ H$ and hence the total number of the
spanning trees is $(kn_2-1)+(\ell-x)$, namely, $\sigma(G \circ
H)\geq kn_2-1+\ell-x$. Since $xn_1\geq kn_2-1$, it follows that
$x=\lceil\frac{k n_2-1}{n_1}\rceil$ and hence $\sigma(G \circ H)\geq
kn_2-1+\ell-\lceil\frac{k n_2-1}{n_1}\rceil$.

$(3)$ Since $\sigma(G)=k$, there exist $k$ spanning tree in $G$, say
$T_1,T_2,\cdots,T_k$. Then there exist parallel subgraphs
$\mathscr{F}_{i,j} \ (1\leq i\leq k, 1\leq j\leq n_2)$ in $G \circ
H$ corresponding to the spanning tree $T_i \ (1\leq i\leq k)$ in
$G$. We pick up $\mathscr{F}_{k,n_2}$. Note that
$\mathscr{F}_{k,n_2}=\bigcup_{v_i\in V(H)}T_{k}(v_i)$, where
$T_{k}(v_i)$ is the corresponding tree of $T_{k}$ in $G(v_i) \ 1\leq
i\leq n_2$. Thus we can obtain $n_2$ such trees isomorphic to the
spanning tree $T_{k}$ of $H$ from $\mathscr{F}_{k,n_2}$, say
$T_{k}(v_1),T_{k}(v_2),\cdots,T_{k}(v_{n_2})$. Since $\tau(H)=\ell$,
there exist $\ell$ spanning tree in $G$, say
$T'_1,T'_2,\cdots,T'_{\ell}$. Then there exist parallel subgraphs
$\mathscr{F}_j'=\bigcup_{u_i\in V(G)}T_j'(u_i) \ (1\leq j\leq \ell)$
in $G \circ H$ corresponding to the spanning tree $T_j'$ in $H$,
where $T'_j(u_i)$ is the corresponding tree of $T_j'$ in $H(u_i)$.
Note that one of
$\mathscr{F}_1',\mathscr{F}_2',\cdots,\mathscr{F}_{\ell}'$ and one
of $T_{k}(v_1),T_{k}(v_2),\cdots,T_{k}(v_{n_2})$ can form a spanning
tree of $G\Box H$. Since $\ell\leq \lfloor\frac{n_2}{2}\rfloor$, we
can obtain $\ell$ spanning trees of $G\Box H$. Note that we also
have $kn_2-1$ parallel subgraphs $\{\mathscr{F}_{i,j}|1\leq i\leq k,
1\leq j\leq n_2\}\setminus \mathscr{F}_{k,n_2}$. Pick up $2x$
parallel subgraphs from $\{\mathscr{F}_{i,j}|1\leq i\leq k, 1\leq
j\leq n_2\}\setminus \mathscr{F}_{k,n_2}$, say
$\mathscr{F}_{a_1,b_1},\mathscr{F}_{a_2,b_2},\cdots,\mathscr{F}_{a_{2x},b_{2x}}
\ (a_{1},a_2,\cdots,a_{2x}\in \{1,2,\cdots,k\})$, such that
$\mathscr{F}_{a_{2r-1},b_{2r-1}}\cup \mathscr{F}_{a_{2r},b_{2r}} \
(1\leq r\leq x)$ contains $(n_1-1)$ perfect cycles. So we can obtain
$x(n_1-1)$ perfect cycles from the above $2x$ parallel subgraphs.
Now we still have $kn_2-1-2x$ parallel subgraphs. Note that one
parallel subgraph and one perfect cycle can form a spanning subgraph
of $G\Box H$ containing a spanning tree of $G\Box H$. If
$x(n_1-1)\geq kn_2-1-2x$, then we can obtain $kn_2-1-2x$ spanning
trees of $G\Box H$. So the total number of the spanning trees of
$G\Box H$ is $(kn_2-1-2x)+\ell$. Since $x(n_1-1)\geq kn_2-1-2x$, it
follows that $x\geq \frac{kn_2-1}{n_1+1}$. We hope that $x$ is as
small as possible. So $\sigma(G\circ H)\geq \sigma(G\Box H)\geq
kn_2-1-\lceil\frac{kn_2-1}{n_1+1}\rceil+\ell
=kn_2+\ell-1+\lceil\frac{kn_2-1}{n_1+1}\rceil$.
\end{proof}

To show the sharpness of the above lower bounds, we consider the
following three examples.

\noindent\textbf{Example 2}. Let $G$ and $H$ be two connected graphs
which can be decomposed into exact $k$ and $\ell$ spanning trees of
$G$ and $H$, respectively. From $(1)$ of the above theorem,
$\sigma(G\circ H)\geq k n_2(=\ell n_1)$. Since $|E(G\circ
H)|=|E(H)|n_1+|E(G)|n_2^2=\ell(n_2-1)n_1+k(n_1-1)n_2^2
=kn_2(n_2-1)+k(n_1-1)n_2^2=kn_2(n_1n_2-1)$, we have $\sigma(G \circ
H)\leq \frac{|E(G \circ H)|}{n_1n_2-1}=kn_2$. Then $\sigma(G \circ
H)=kn_2(=\ell n_1)$. So the upper bound of $(1)$ is sharp.

\noindent\textbf{Example 3}. Consider the graphs $G=P_3$ and
$H=K_4$. Clearly, $\sigma(G)=k=1$, $\sigma(H)=\ell=2$, $n_1=3$,
$n_2=4$, $|E(G)|=2$, $|E(H)|=6$ and $6=\ell n_1>k n_2=4$. On one
hand, we have $\ell n_1-k n_2=2$ and $\tau(G \circ H)\geq
kn_2-\lceil\frac{k
n_2-1}{n_1}\rceil+\ell-1=4-1+2-\lceil\frac{4-1}{3}\rceil=4$. On the
other hand, $|E(G \circ H)|=50$. Then $\sigma(G \circ H)\leq
\frac{|E(G \circ H)|}{n_1n_2-1}=\lfloor\frac{50}{11}\rfloor=4$. So
$\sigma(G \circ H)=4$. So the upper bound of $(2)$ is sharp.

\noindent\textbf{Example 4}. Let $G=K_{4}^-$ be a graph obtained
from $K_4$ by deleting one edge, and $H=P_3$. Clearly,
$\sigma(G)=k=2$, $\sigma (H)=\ell=1$, $n_1=4$, $n_2=3$, $|E(G)|=5$,
$|E(H)|=2$ and $4=\ell n_1<k n_2=6$. On one hand, $\sigma(G \circ
H)\geq kn_2+\ell-1-2\lceil\frac{kn_2-1}{n_1+1}\rceil=4$. On the
other hand, $|E(G \circ H)|=|E(H)|n_1+|E(G)|n_2^2=53$. Then
$\sigma(G \circ H)\leq \frac{|E(G \circ
H)|}{n_1n_2-1}=\lfloor\frac{53}{11}\rfloor=4$. So $\sigma(G \circ
H)=4$ and the upper bound of $(3)$ is sharp.

\end{document}